\ifpdf \usepackage[colorlinks=true, citecolor=blue, linkcolor=blue, urlcolor=blue]{hyperref} \fi
\newtheorem{formula}{}[section]
\newtheorem{definition}[formula]{Definition}
\newtheorem{corollary}[formula]{Corollary}
\newtheorem{remark}[formula]{Remark}
\newtheorem{lemma}[formula]{Lemma}
\newtheorem{theorem}[formula]{Theorem}
\newtheorem{claim}[formula]{Claim}
\def\thrm{\begin{theorem}}
\def\thrml#1{\begin{theorem}\label{#1}}
\def\ethrm{\end{theorem}}
\def\rmrk{\begin{remark}}
\def\rmrkl#1{\begin{remark}\label{#1}}
\def\ermrk{\end{remark}}
\def\dfntn{\begin{definition}}
\def\dfntnl#1{\begin{definition}\label{#1}}
\def\edfntn{\end{definition}}
\def\nmrt{\begin{enumerate}}
\def\enmrt{\end{enumerate}}
\def\qtnl#1{\begin{equation}\label{#1}}
\def\eqtn{\end{equation}}
\def\lmm{\begin{lemma}}
\def\lmml#1{\begin{lemma}\label{#1}}
\def\elmm{\end{lemma}}
\def\crllr{\begin{corollary}}
\def\crllrl#1{\begin{corollary}\label{#1}}
\def\ecrllr{\end{corollary}}
\def\css{\begin{cases}}
\def\ecss{\end{cases}}
\def\prf{\begin{proof}}
\def\eprf{\end{proof}}
\def\al{\alpha}
\def\la{\lambda}
\def\ge{\geq}
\def\le{\leq}
\def\tt{\theta}
\begin{document}

\title
[A characterization of the Grassmann graphs: one missing case]
{A characterization of the Grassmann graphs:\\ one missing case}

\author{Jack H. Koolen}
\address{University of Science and Technology of China, 
CAS Wu Wen-Tsun Key Laboratory of Mathematics, Hefei, 
People's Republic of China}
\address{School of Mathematical Sciences, University of Science and Technology of China, Hefei, 
People's Republic of China}
\email{koolen@ustc.edu.cn}

\author{Chenhui Lv}
\address{School of Mathematical Sciences, University of Science and Technology of China, Hefei, 
People's Republic of China}
\email{lch1994@mail.ustc.edu.cn}

\author{Alexander L. Gavrilyuk}
\address{University of Memphis, Tennessee, 
U.S.A.}
\email{a.gavrilyuk@memphis.edu}

\date{}

\begin{abstract}
We prove that the Grassmann graphs $J_2(2D+3,D)$, $D\geq 3$, 
are characterized by 
their intersection numbers, which 
settles one of the few remaining 
cases.
\end{abstract}

\maketitle
\newcommand\blfootnote[1]{%
		\begingroup
		\renewcommand\thefootnote{}\footnote{#1}%
		\addtocounter{footnote}{-1}%
		\endgroup}
        
	\blfootnote{2020 Mathematics Subject Classification: 05E30. } 
	\blfootnote{Key words: Distance-regular graphs, Grassmann graphs, classical parameters}

\section{Introduction}\label{s:intro}
Let $\mathbb{F}_q$ be the finite field with $q$ elements, and $V$ be a vector space of dimension $n\geq 2$ over $\mathbb{F}_q$, and let $D$ be an integer satisfying $1\leq D \leq n-1 $. The Grassmann graph $J_q(n, D)$ is the graph with the vertex set consisting of all $D$-dimensional subspaces of $V$,  with two vertices being adjacent if and only if they meet in a subspace of dimension $D-1$. One can easily see that $J_q(n, D)$ is isomorphic to $J_q(n, n-D)$; thus, 
we further assume that $n\geq 2D$, in which case 
$D$ is the diameter of the graph.

The classification of all infinite families of distance-regular graphs 
of unbounded diameter, including the Grassmann graphs, is a major problem in algebraic combinatorics. This problem requires a characterization of the known examples of distance-regular graphs by their intersection numbers. For more results on this and related problems, we refer to the survey paper by Van Dam, Koolen and Tanaka \cite{SurveyDRG}. 

In particular, 
much attention has been paid to this problem
in regard to the Grassmann graphs. 
Note that the Grassmann graphs $J_q(n, 2)$ have the same intersection numbers as the block graphs of $2$-$(\frac{q^n-1}{q-1}, q+1, 1)$ designs, while among the latter ones there are many pairwise non-isomorphic graphs \cite {Jung, Wilson}. Thus, one has to assume $D\geq 3$.
In \cite{km1}, Metsch showed that the Grassmann graph $J_q(n, D), D\geq 3$, is uniquely determined by its intersection numbers unless one of the following cases holds: (1) $n=2D$ or $n=2D+1, q\geq 2$; (2) $n=2D+2, q\in\{2, 3\}$; (3) $n=2D+3, q=2$. In \cite{gk}, Gavrilyuk and Koolen showed that $J_q(2D, D)$ is characterized by its intersection numbers provided that $D$ 
is sufficiently large (see \cite[Theorem~1]{gk} for the precise statement). In \cite{vDK}, Van Dam and Koolen discovered a new family of distance-regular graphs, the twisted Grassmann graphs, which have the same intersection numbers as $J_q(2D+1, D)$ for any prime power $q$.
In this paper, we settle the third exceptional 
case from the Metsch's result, 
i.e., when $n=2D+3$ and $q=2$.

Let $G$ be a distance-regular graph with the same intersection numbers as $J_q(n, D)$. The key idea of Metsch \cite{km1} (see Section \ref{s:pls} 
for more details) was to construct a partial linear space, say $\mathcal{P}$, by taking certain maximal cliques of $G$ as lines; then, by using a result of Ray-Chaudhuri and Sprague \cite{rcs}, to show that the point graph of the partial linear space $\mathcal{P}$ is isomorphic to $J_q(n, D)$. To find large cliques in $G$, which are 
suitable to construct $\mathcal{P}$, Metsch used a counting argument known as the Bose-Laskar type argument (see Theorem \ref{plsm} below).

One can see that the Grassman graph $J_q(n,D)$ 
has two families of maximal cliques. 
When $n=2D$, the cliques from these families 
have the same size, and for $n\in \{2D+1,
2D+2,2D+3\}$, 
they have `almost' the same size. 
This makes Theorem \ref{plsm} is 
no longer applicable, and this is the reason 
why the above idea does not cover the remaining cases. 

In our paper, we will refine Metsch's approach to establish a partial linear space from a graph $G$ that has the same intersection numbers as $J_2(2D+3, D)$. The core idea of our improvement arises from the following observation: if a clique $C$ 
is sufficiently large 
(with the lower bound on its size given 
in terms of the least eigenvalue of $G$), 
then every vertex outside $C$ has either 
too few or too many neighbors from $C$
(see Lemma \ref{cbound}). Based on this, 
we can relax one of the necessary  conditions in the Metsch's Theorem \ref{plsm}, and propose our improved Theorem \ref{pls2}. Using this result, we can establish a partial linear space from $G$. After that, the uniqueness of $J_2(2D+3, D)$ follows by repeating the argument from 
\cite{km1}.

This paper is organized as follows. In Section 2, we recall definitions and basic facts about distance-regular graphs. We also state 
the above mentioned result by Ray-Chaudhuri and Sprague that characterizes the Grassmann graphs as the point graphs of partial linear spaces satisfying certain conditions. In Section 3, we formulate the above mentioned theorem by Metsch, which enables one to construct a partial linear space from a graph. We also give a crucial Lemma \ref{cbound} that, for a graph with a lower bound on the least eigenvalue, controls the number of neighbors of a vertex outside a clique. Using this, we refine the Metsch's result and propose Lemma \ref{pls} and Theorem \ref{pls2}. In Section 4, we apply Theorem \ref{pls2} to prove the uniqueness of $J_2(2D+3, D)$.

\section{Definitions and preliminaries}\label{sect:DP}

\subsection{Basic notation}
All the graphs considered in this paper are finite, undirected and simple. For a given graph $G=(V(G),E(G))$, two vertices are 
\emph{adjacent} if they are joined by an edge and adjacent vertices are also called \emph{neighbors}. For a vertex 
$x\in V(G)$, we denote by $N_G(x)$ the set of 
neighbors of $x$ and put $d_G(x):=|N_G(x)|$, which 
is called the \emph{valency} of $x$ in $G$.
With a slight abuse of notation, 
we also use $N_G(x)$ to denote the \emph{local graph} 
of $G$ at $x$, which is the subgraph induced on $G$ by 
the neighbors of $x$.
A graph $G$ is said to be \emph{regular} with valency $k$ 
if $d_G(x)=k$ holds for all $x\in V(G)$. 

The \emph{adjacency matrix} $A:=A(G)$ of $G$ is the 
$(0,1)$-matrix with rows and columns indexed by $V(G)$, 
such that the $(x,y)$-entry of $A$ is equal to 1 
if and only if $x$ and $y$ are adjacent in $G$. 
The \emph{eigenvalues} of $G$ are those of $A$.
Let $\la_{\min}(G)$ denote the smallest eigenvalue of a graph $G$.  

A \emph{clique} is a complete subgraph, i.e., 
the one induced by
a set of pairwise adjacent vertices. An \emph{anti-clique} 
is a set of mutually nonadjacent vertices. The order 
of a clique or an anti-clique is its cardinality.
Let $m,n$ be positive integers and let $\tilde K_{m,n}$ be 
the graph on $m+n+1$ vertices consisting of a complete graph 
$K_{m+n}$ and a vertex $\infty$ which is adjacent to exactly $m$ verices of the $K_{m+n}$.
Note that $\tilde K_{\ell,n}$ is a subgraph of 
$\tilde K_{m,n}$ for any $\ell\leq m$.

Let $\partial(x,y)=\partial_G(x,y)$ denote 
the path-length distance function for $G$, and 
put $D = \max\{\partial(y, z)\mid y, z \in V(G)\}$, 
called the \emph{diameter} of $G$.

\subsection{Distance-regular graphs}
A connected graph $G$ of diameter $D$ is said to be  
\emph{distance-regular} if there exist integers 
$c_i,a_i,b_i~(0\leq i\leq D)$ such that, for every 
pair of vertices $x,y\in V(G)$ with $\partial(x,y)=i$, 
the vertex $y$ has $c_i$ neighbors at distance $i-1$ from $x$, 
$a_i$ at distance $i$, and $b_i$ at distance $i+1$, respectively. 
The numbers $c_i$, $a_i$ and $b_i$ are called the 
\emph{intersection~numbers} of $G$, and the array 
$\{b_0,b_1,...,b_{D-1}; c_1,c_2,...,c_D\}$ is called 
the \emph{intersection array} of $G$. 
In particular, a distance-regular graph is regular 
with valency $k:=b_0=c_i+a_i+b_i$.

Given an integer $b$, let 
\begin{equation*}
\begin{bmatrix}
  j\\
  1\\
\end{bmatrix}_b= 1+b+b^2+...+b^{j-1}
\end{equation*}
denote the $b$-binomial coefficient. 
With a slight abuse of notation, 
assuming that $b$ is clear from the context, 
we write 
$\begin{bmatrix}
j\\
1
\end{bmatrix}$ for the $b$-binomial coefficient.

A distance-regular graph $G$ of diameter $D$ has \emph{classical parameters}, written 
as a tuple $(D,b,\alpha,\beta)$, if the intersection numbers 
of $G$ can be expressed as follows:
\begin{equation}\label{bi}
 b_i=\left(\begin{bmatrix}
    D \\
    1 
  \end{bmatrix}-
\begin{bmatrix}
    i \\
    1
  \end{bmatrix}\right)
  \left(\beta-\al\begin{bmatrix}
    i \\
    1 
  \end{bmatrix}\right),\quad 0\le i\le D-1,
\end{equation}
\begin{equation}\label{ci}
c_i=\begin{bmatrix}
i\\
1
\end{bmatrix}
\left(1+\al\begin{bmatrix}
i-1\\
1\\
\end{bmatrix}\right),\quad 1\le i\le D.
\end{equation}
By \cite[Corollary 8.4.2]{bcn89}, the distinct eigenvalues of $G$ 
are 
\begin{equation}\label{eigen}
\theta_i=
\begin{bmatrix}
D-i\\
1    
\end{bmatrix}
\left(\beta-\alpha
\begin{bmatrix}
i\\
1    
\end{bmatrix}\right)-
\begin{bmatrix}
i\\
1    
\end{bmatrix}=\frac{b_i}{b^i}-
\begin{bmatrix}
i\\
1    
\end{bmatrix}
,\quad 0\leq i\leq D.\nonumber
\end{equation}
Note that if $b\geq1$, then the eigenvalues $\theta_i$ 
$(0\leq i\leq D)$ of $G$ are said to be in the \emph{natural ordering}, 
i.e., $k=\theta_0>\theta_1>\cdots>\theta_D$.

The Grassmann graph $J_q(n, D)$, defined in 
Section \ref{s:intro}, 
is distance-regular; all of its intersection numbers can be expressed in terms of $n, D$ and $q$ 
as follows. 

\begin{lemma}[cf.{\cite[Theorem~9.3.3]{bcn89}}]\label{gg}
The Grassmann graph $J_q(n,D),n\geq 2D$, has classical parameters
$$(D,b,\alpha,\beta)=(D,q,q,\begin{bmatrix}
  n-D+1\\
  1\\
  \end{bmatrix}-1).$$
A distance-regular graph with these classical parameters has intersection numbers 
$$b_{j-1}=q^{2j-1}\begin{bmatrix}
  n-D-j+1\\
  1
  \end{bmatrix}\begin{bmatrix}
    D-j+1\\
    1
    \end{bmatrix}, ~c_j=\begin{bmatrix}
      j\\
      1
      \end{bmatrix}^2,\quad (1\leq j\leq D)$$ 
and its eigenvalues and their respective multiplicities are given by 
$$\theta_j=q^{j+1}\begin{bmatrix}
  n-D-j\\
  1\\
  \end{bmatrix}\begin{bmatrix}
    D-j\\
    1\\
    \end{bmatrix}-\begin{bmatrix}
      j\\
      1\\
      \end{bmatrix}, ~m_j=\begin{bmatrix}
        n\\
        j\\
        \end{bmatrix}-\begin{bmatrix}
          n\\
          j-1\\
          \end{bmatrix}\quad (0\leq j\leq D).$$ 
\end{lemma}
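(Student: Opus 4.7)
The plan is to verify a collection of standard computations, the details of which can be found in \cite[Theorem~9.3.3]{bcn89}. I would structure the argument in three steps.

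First, to establish distance-regularity and to compute the intersection numbers of $J_q(n,D)$ directly, I fix two $D$-subspaces $U,W$ of $V$ with $\dim(U\cap W)=D-i$, so that $\partial(U,W)=i$. Every $D$-subspace $W'$ adjacent to $W$ has $H:=W\cap W'$ as a hyperplane of $W$, and can be written as $W'=H+L$ for a line $L\not\subseteq W$, where each such $W'$ arises from $q^{D-1}$ different lines $L$. I then count $c_i$ (resp.\ $b_i$) by ranging over pairs $(H,L)$ producing the desired value $D-i+1$ (resp.\ $D-i-1$) for $\dim(U\cap W')$, with a case split on whether $H\supseteq U\cap W$ or not. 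The resulting counts depend only on $i$, establishing distance-regularity, and yield exactly the expressions $c_i=\begin{bmatrix}i\\1\end{bmatrix}^2$ and $b_{i-1}=q^{2i-1}\begin{bmatrix}n-D-i+1\\1\end{bmatrix}\begin{bmatrix}D-i+1\\1\end{bmatrix}$ stated in the lemma.

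Second, to identify the classical parameters, I substitute $(b,\alpha,\beta)=(q,q,\begin{bmatrix}n-D+1\\1\end{bmatrix}-1)$ into (\ref{bi}) and (\ref{ci}). Applying the identities $\begin{bmatrix}n-D+1\\1\end{bmatrix}-1=q\begin{bmatrix}n-D\\1\end{bmatrix}$ and $\begin{bmatrix}n-D\\1\end{bmatrix}-\begin{bmatrix}i\\1\end{bmatrix}=q^i\begin{bmatrix}n-D-i\\1\end{bmatrix}$, together with $1+q\begin{bmatrix}i-1\\1\end{bmatrix}=\begin{bmatrix}i\\1\end{bmatrix}$, reduces (\ref{bi}) and (\ref{ci}) to the expressions obtained in the first step. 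This simultaneously confirms that $J_q(n,D)$ has the claimed classical parameters and that any distance-regular graph with these classical parameters has the listed intersection numbers.

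Third, the eigenvalues $\theta_j$ come from the general formula $\theta_j=\begin{bmatrix}D-j\\1\end{bmatrix}(\beta-\alpha\begin{bmatrix}j\\1\end{bmatrix})-\begin{bmatrix}j\\1\end{bmatrix}$ of \cite[Corollary~8.4.2]{bcn89}; substituting the classical parameters and invoking the same $q$-binomial identities yields the stated $\theta_j$. The multiplicities $m_j$ can be obtained from the standard recursion for multiplicities of a distance-regular graph in terms of its intersection numbers (via the right eigenvectors of the tridiagonal intersection matrix), resulting in the telescoping expression $\begin{bmatrix}n\\j\end{bmatrix}-\begin{bmatrix}n\\j-1\end{bmatrix}$; alternatively, they may be read off directly from \cite[Theorem~9.3.3]{bcn89}. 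The only real obstacle is keeping the $q$-binomial bookkeeping straight; no new ideas beyond direct calculation are required.
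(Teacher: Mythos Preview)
The paper does not give a proof of this lemma at all: it is stated with the citation ``cf.~\cite[Theorem~9.3.3]{bcn89}'' and used as a black box. So there is nothing in the paper to compare your proposal against.

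That said, your outline is a faithful sketch of the standard argument one finds in \cite{bcn89}. The first step (counting $c_i$ and $b_i$ by enumerating pairs $(H,L)$ with $H$ a hyperplane of $W$ and $L$ a line outside $W$) is exactly how distance-regularity of $J_q(n,D)$ is established there, and the second and third steps are routine $q$-binomial manipulations combined with the general eigenvalue formula for graphs with classical parameters. Nothing is missing conceptually; the only caution is that in Step~1 the case split on whether $H\supseteq U\cap W$ needs to be handled carefully for both $c_i$ and $b_i$ (for $b_i$ one also tracks whether $L\subseteq U+W$), but you already flagged this as bookkeeping rather than a difficulty.
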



The following lemma is a well-known inequality for the smallest 
eigenvalues of local graphs of distance-regular graphs.

\begin{lemma}[cf.{\cite[Theorem~4.4.3]{bcn89}}]\label{localgraph}
Let $G$ be a distance-regular graph of 
diameter $D\ge 3$ and with distinct eigenvalues 
$\tt_0>\tt_1>... >\tt_D$.
Then, for a vertex $x$ of $G$, 
\[
\la_{\min}(N_G(x))\ge -\frac{b_1}{\tt_1+1}-1.
\]
If $G$ has classical parameters 
$(D,b,\alpha,\beta)$ with $b\geq 1$, then $\la_{\min}(N_G(x))\geq -b-1$.
\end{lemma}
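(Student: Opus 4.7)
I plan to derive the second inequality as an immediate consequence of the first, and to prove the first by the classical spectral-moments argument on the local graph following \cite[Thm.~4.4.3]{bcn89}. The core idea is that the restriction of $A$ and $A^2$ to the neighborhood $N(x)$ is tightly controlled by the intersection numbers $a_1,c_2,k$, while the ambient spectrum of $A$ restricted to $\mathbf{1}^\perp$ is confined to $[\theta_D,\theta_1]$; combining these two sources of information pins down a lower bound on $\la_{\min}(N_G(x))$.

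\textbf{First inequality (sketch).}
Set $\mu := \la_{\min}(N_G(x))$ and choose a unit eigenvector $u$ of $B := A(N_G(x))$ for $\mu$. Since $N_G(x)$ is $a_1$-regular with Perron vector $\mathbf{1}_{N(x)}$ for eigenvalue $a_1$, I may assume $u \perp \mathbf{1}_{N(x)}$ (otherwise $\mu = a_1$, in which case the local graph is complete or empty and the stated bound is trivial). Extending $u$ by zero to $\tilde u \in \mathbb{R}^{V(G)}$ gives $\tilde u \perp \mathbf{1}_{V(G)}$, so $E_0 \tilde u = 0$ in the spectral decomposition $A = \sum_i \theta_i E_i$. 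Two moment identities then follow. First, $\tilde u^T A \tilde u = u^T B u = \mu$. Second, since any two distinct vertices of $N(x)$ are at distance at most $2$ in $G$, one has $A^2|_{N(x)\times N(x)} = (k-c_2)I + (a_1-c_2)B + c_2 J$, which together with $u\perp \mathbf{1}_{N(x)}$ gives $\tilde u^T A^2 \tilde u = (k-c_2) + (a_1-c_2)\mu$. Writing $p_i := \|E_i\tilde u\|^2$, the three constraints $\sum_{i\ge 1}p_i=1$, $\sum_{i\ge 1}\theta_i p_i=\mu$, $\sum_{i\ge 1}\theta_i^2 p_i=(k-c_2)+(a_1-c_2)\mu$, combined with $\theta_i\le \theta_1$ on $\mathbf{1}^\perp$ and the relation $k=1+a_1+b_1$, yield the inequality $(\mu+1)(\theta_1+1)+b_1\ge 0$, as worked out in \cite[Thm.~4.4.3]{bcn89}.

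\textbf{Main obstacle.}
The technical difficulty is that the most natural candidate polynomial $(A+I)(\theta_1 I - A)$ is not positive semidefinite on $\mathbf{1}^\perp$: its eigenvalue on the $\theta_i$-eigenspace is $(\theta_i+1)(\theta_1-\theta_i)$, which is negative whenever $\theta_i < -1$, and one cannot in general exclude $\theta_D<-1$. Hence a single Rayleigh quotient estimate is insufficient, and one has to take a carefully chosen nonnegative combination of the three moment identities above in order to isolate the factor $b_1/(\theta_1+1)$.

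\textbf{Second inequality.}
Assuming the first inequality, I apply Lemma~\ref{gg} to read off $b_1 = \bigl(\begin{bmatrix}D\\1\end{bmatrix}-1\bigr)(\beta-\alpha)$ and $\theta_1+1 = \begin{bmatrix}D-1\\1\end{bmatrix}(\beta-\alpha)$. The $b$-binomial identity $\begin{bmatrix}D\\1\end{bmatrix}-1 = b+b^2+\cdots+b^{D-1} = b\cdot\begin{bmatrix}D-1\\1\end{bmatrix}$ (valid since $b\ge 1$ and hence $\beta\neq\alpha$) gives $b_1/(\theta_1+1)=b$, and therefore $\la_{\min}(N_G(x))\ge -1 - b_1/(\theta_1+1) = -b-1$, as claimed.
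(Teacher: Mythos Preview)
The paper gives no proof of this lemma at all; it is stated as a quotation of \cite[Theorem~4.4.3]{bcn89}, so there is nothing in the paper to compare your argument against line by line. Your proposal therefore goes further than the paper does.

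Your derivation of the second inequality from the first is correct and is exactly the intended computation: with $b\ge 1$ the natural ordering holds, $b_1=\bigl(\begin{smallmatrix}D\\1\end{smallmatrix}\bigr)_b-1$ times $(\beta-\alpha)$, $\theta_1+1=\begin{smallmatrix}D-1\\1\end{smallmatrix}_b(\beta-\alpha)$, and the identity $\begin{smallmatrix}D\\1\end{smallmatrix}_b-1=b\begin{smallmatrix}D-1\\1\end{smallmatrix}_b$ gives $b_1/(\theta_1+1)=b$. One small correction: the reason $\beta\neq\alpha$ is not ``$b\ge 1$'' per se, but that $b_1>0$ for a graph of diameter $\ge 2$; combined with $\begin{smallmatrix}D\\1\end{smallmatrix}_b-1>0$ this forces $\beta>\alpha$, which also guarantees $\theta_1+1>0$ so that the division is legitimate.

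For the first inequality, your moment setup is correct (the formula $A^2|_{N(x)\times N(x)}=(k-c_2)I+(a_1-c_2)B+c_2J$ is exactly right), but the sentence ``combined with $\theta_i\le\theta_1$ \dots\ yield $(\mu+1)(\theta_1+1)+b_1\ge 0$'' hides a real gap. The three scalar constraints $\sum p_i=1$, $\sum\theta_ip_i=\mu$, $\sum\theta_i^2p_i=(k-c_2)+(a_1-c_2)\mu$ together with only the upper bound $\theta_i\le\theta_1$ do \emph{not} produce an inequality in $\theta_1$ alone; no linear combination of those three identities is of the required sign. Your ``Main obstacle'' paragraph correctly diagnoses why the naive PSD polynomial fails, but the resolution you describe (``a carefully chosen nonnegative combination of the three moment identities'') does not exist. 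The argument in \cite{bcn89} brings in more structure---either the lower endpoint $\theta_D$ as well (via $(\theta_1-X)(X-\theta_D)\ge 0$) followed by an algebraic identity relating $\theta_1,\theta_D,k,a_1,c_2,b_1$, or equivalently an additional test vector supported off $N(x)$---to close the bound. Since you explicitly defer to \cite{bcn89} for this step, the sketch is acceptable as a pointer, but you should be aware that the three moments by themselves are insufficient.
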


\subsection{Partial linear spaces}\label{s:pls}
An \emph{incidence~structure} $\mathcal{I}=(\mathcal{P}, \mathcal{L}, I)$ consists of a set $\mathcal{P}$ of \emph{points}, a set $\mathcal{L}$ of \emph{lines} (disjoint from $\mathcal{P}$), and a relation $I \subseteq \mathcal{P} \times \mathcal{L}$ called \emph{incidence}. If $(p, L) \in I$, then we say that the point $p$ and the line $L$ are \emph{incident}. 
Two lines are \emph{meeting} if they are incident to a common point.
A \emph{partial linear space} is an incidence structure such that any two points are incident with at most one line. The \emph{point~graph} $G$ of an incidence structure $(\mathcal{P}, \mathcal{L}, I)$ is the graph with the vertex set $\mathcal{P}$ and two distinct points are adjacent if and only if they are incident with a common line. Note that lines induce (maximum) cliques in $G$. For a point $p$ and a line $L$, the \emph{distance} between them is defined as the minimum of the distances $\partial_G(p, p')$, where $p'$ is taken in $L$. The \emph{distance} between two points is the distance between them in the point graph. 

The following theorem, due to Ray-Chaudhuri and Sprague \cite{rcs}, shows a characterization of the Grassmann graphs  as the point graphs of 
partial linear spaces satisfying certain conditions. 

\begin{theorem}[cf.{\cite[Theorem~9.3.9\footnote{Note that \cite[Theorem~9.3.9]{bcn89} assumes $D\geq \frac{n}{2}$, whereas in this paper, we assume $D\leq\frac{n}{2}$; therefore, the statement given here is different, but equivalent.}]{bcn89}}]
\label{rcs}
    Let $(\mathcal{P}, \mathcal{L}, I)$ be a partial linear space such that for some integer $q\geq 2$: 
    \begin{enumerate}[(1)]
        \item each line has at least $q^2+q+1$ points;
        \item each point is on more than $q+1$ lines;
        \item if $p \in \mathcal{P}$ and $L \in \mathcal{L}$ such that $p$ is at distance $1$ from $L$, then there are precisely $q+1$ lines on $p$ meeting $L$;
        \item if $p, p' \in \mathcal{P}$ are at distance $2$, then there are precisely $q+1$ lines on $p$ that are at distance $1$ from $p'$;
        \item the point graph $G$ of $(\mathcal{P}, \mathcal{L}, I)$ is connected.
    \end{enumerate}
    Then $q$ is a prime power and, for some 
    positive integers $n$, $D$, $n\geq 2D$, 
    the graph $G$ is 
    isomorphic to the Grassmann graph $J_q(n,D)$.
    In particular,  
    $(\mathcal{P}, \mathcal{L}, I) \cong (\begin{bmatrix}
  V\\
  D
  \end{bmatrix}, \begin{bmatrix}
  V\\
  D-1
  \end{bmatrix}, \supset)$ for 
  the vector space $V=\mathbb{F}_q^n$, 
  where $\begin{bmatrix}
  V\\
  k
  \end{bmatrix}$ is the set of all $k$-dimensional 
  subspaces of $V$.  
\end{theorem}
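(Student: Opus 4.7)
The plan is to reconstruct, from $(\mathcal{P},\mathcal{L},I)$, an ambient projective space over $\mathbb{F}_q$ inside which the points of $\mathcal{P}$ appear as $D$-dimensional subspaces and the lines of $\mathcal{L}$ as $(D-1)$-dimensional subspaces under reverse inclusion. The reconstruction proceeds through three stages: parameter uniformity, the recognition of \emph{singular planes} as projective planes of order $q$, and a Veblen--Young/fundamental-theorem argument that produces the ambient projective space.

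First I would exploit conditions (1)--(4) together with double-counting to conclude that the partial linear space is regular: every line contains the same number of points and every point lies on the same number of lines. The uniform line-size will turn out to be $\begin{bmatrix}n-D+1\\1\end{bmatrix}$ for the integer $n$ produced later, and the uniform point-degree $\begin{bmatrix}D\\1\end{bmatrix}$; at this stage the numerology already forces $q^2+q+1$ to arise inside every singular plane.

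Second, given two lines $L_1,L_2\in\mathcal{L}$ meeting at a point $p$, I would define the singular plane $\pi(L_1,L_2)$ as the closure of $L_1\cup L_2$ under the operation of adjoining, for every pair of collinear points, the unique line through them. Condition (3) guarantees that each point of $L_1\setminus\{p\}$ is joined to each point of $L_2\setminus\{p\}$ by a line distinct from $L_1$ and $L_2$, while condition (4) controls the lines that appear at distance $2$ from $p$. Together these force $\pi(L_1,L_2)$ to consist of exactly $q^2+q+1$ points, lying on $q^2+q+1$ ``traces'' of length $q+1$, with the incidence pattern of a projective plane of order $q$. A crucial point is that a trace has length $q+1$ while the ambient line carrying it has length $\begin{bmatrix}n-D+1\\1\end{bmatrix}\geq q^2+q+1$, so there is room to iterate and recover higher-dimensional singular subspaces.

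Third, I would inductively define higher-dimensional singular subspaces and verify the Veblen--Young exchange axiom for the resulting incidence system. The fundamental theorem of projective geometry, combined with condition (5) to exclude a reducible decomposition, then supplies an isomorphism of the reconstructed geometry with $\mathrm{PG}(n-1,q)$ for some integer $n$ and prime power $q$; the identification $(\mathcal{P},\mathcal{L},I)\cong(\begin{bmatrix}V\\D\end{bmatrix},\begin{bmatrix}V\\D-1\end{bmatrix},\supset)$ follows by matching the local rank $\begin{bmatrix}D\\1\end{bmatrix}$ at a point to the number of hyperplanes of a $D$-space. The main obstacle is the singular-plane recognition step: although (3) and (4) supply the correct local counts, showing that the closure $\pi(L_1,L_2)$ is finite and carries a genuine projective-plane structure of order $q$ requires careful case analysis, making repeated use of the partial-linear-space axiom to rule out ``doubled'' lines and to pin down the $q+1$ traces. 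Once the planes are in hand, the classical Veblen--Young machinery carries the argument to completion without further surprises.
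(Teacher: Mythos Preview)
The paper does not prove this theorem; it is stated as a citation of \cite[Theorem~9.3.9]{bcn89}, itself a reformulation of the result of Ray-Chaudhuri and Sprague~\cite{rcs}, and is used only as a black box in the proof of Theorem~\ref{main}. There is therefore no proof in the paper against which to compare your proposal.

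That said, your outline is in the spirit of the original Ray-Chaudhuri--Sprague argument (regularity, local projective-space recognition, Veblen--Young reconstruction), but one detail of your second step is off. The closure $\pi(L_1,L_2)$ of two concurrent lines is \emph{not} a projective plane of order $q$ with $q^2+q+1$ points. In the Grassmann model, two $(D-1)$-spaces $L_1,L_2$ inside a common $D$-space $p$ determine a $(D-2)$-space $W=L_1\cap L_2$; the set of points lying on some line of the pencil through $p$ over $W$ is the set of all $D$-spaces containing $W$, and this has size $\begin{bmatrix}n-D+2\\2\end{bmatrix}_q$, not $q^2+q+1$. The object that genuinely carries a projective-space structure is the \emph{residue at a point}: take as ``residue-points'' the lines of $\mathcal{L}$ through $p$, and as ``residue-lines'' the $(q+1)$-element pencils supplied by condition~(3). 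This residue is what must be shown to be a $\mathrm{PG}(D-1,q)$, with condition~(4) used to glue the residues at collinear points coherently. If you reorganize your second step to work on line-residues at points rather than on point-closures of line pairs, the remainder of your plan (Veblen--Young, then the fundamental theorem of projective geometry to produce $\mathbb{F}_q^n$) goes through.
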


Let us recall that $J_q(n, D)$ has two families of maximal cliques: (i) the collections of vertices containing a fixed subspace of dimension $D-1$, and each of them is of size 
$\begin{bmatrix}
  n-D+1\\
  1
  \end{bmatrix}_q$;
(ii) the collections of vertices contained in a fixed subspace of dimension $D+1$, and each of them is of size 
$\begin{bmatrix}
  D+1\\
  1
  \end{bmatrix}_q$.
Each edge of $J_q(n, D)$ is contained in a unique clique of each family. 
The partial linear space from the above 
theorem has the maximal cliques of the former 
family as lines, and the vertices as points, 
with the natural incidence relation between them.


Let us again review the strategy used by Metsch \cite{km1} to prove the uniqueness of the Grassmann graphs: first, he applied Theorem \ref{plsm} from Section \ref{sect:BM} to identify sufficiently large cliques in a graph, and defined them as lines, thereby establishing a partial linear space. Second, 
it was verified that this partial linear space satisfies the hypothesis of Theorem \ref{rcs}, 
whence the result follows.

We will adopt a similar strategy. 
However, to achieve a desired result, 
in the next section
we will refine Theorem \ref{plsm} 
in the form of Theorem \ref{pls2}.

\section{An improved Bose-Lasker argument}\label{sect:BM}

The following Theorem \ref{plsm} is given by Metsch \cite{km1}. 

\begin{theorem}[cf.{\cite[Result 2.2]{km1}}]\label{plsm}
Let $s, k,w$ and $e$ be positive integers. Suppose that $G$ is a $k$-regular graph satisfying the following conditions.
\begin{enumerate}[(1)]
  \item Any two adjacent vertices have $w$ common neighbors.
  \item Any two non-adjacent vertices have at most $e+1$ common neighbors.
  \item $w>(2s-1)e-1$.
  \item $k<(s+1)(w+1)-\frac{s(s+1)}{2} e$.
\end{enumerate}
Define a line to be a maximal clique of $G$ 
with at least $w+2-(s-1)e$ vertices. Then every vertex is on at most $s$ lines, and any two adjacent vertices occur together in a unique line.
\end{theorem}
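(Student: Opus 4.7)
The plan is to prove the two conclusions in sequence: first uniqueness (any edge lies in at most one line), then the bound on lines through each vertex. Fix an edge $\{x,y\}$ and set $\Omega := N_G(x)\cap N_G(y)$, so $|\Omega|=w$ by (1). Any maximal clique $K$ containing $\{x,y\}$ has the form $K = C\cup\{x,y\}$ with $C\subseteq\Omega$, and $K$ is a line precisely when $|C|\ge w-(s-1)e$.

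For uniqueness, I would argue by contradiction. Suppose $\{x,y\}$ lies in two distinct lines $K_1,K_2$, and set $C_i := K_i\setminus\{x,y\}$. Given $u\in C_1\setminus C_2$, the maximality of $K_2$ forces $u$ to have a non-neighbor in $K_2$; this non-neighbor cannot equal $x$ or $y$ (since $u\in\Omega$) and cannot lie in $C_1\cap C_2$ (since $u$ is adjacent to all of $C_1$), so it must be some $z\in C_2\setminus C_1$ with $u\not\sim z$. Now apply (2) to the pair $\{u,z\}$: its common neighbors include $x$, $y$, and every vertex of $C_1\cap C_2$ (each such vertex lies in the clique $K_1$ with $u$ and in the clique $K_2$ with $z$), so $2+|C_1\cap C_2|\le e+1$, whence $|C_1\cap C_2|\le e-1$. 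Combining $|C_1|+|C_2|=|C_1\cup C_2|+|C_1\cap C_2|\le w+(e-1)$ with $|C_i|\ge w-(s-1)e$ gives $w\le (2s-1)e-1$, contradicting (3).

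For the bound on lines through a vertex, suppose $v$ lies in $t\ge s+1$ lines $L_1,\dots,L_t$. The uniqueness just established implies $L_i\cap L_j=\{v\}$ for $i\ne j$ (otherwise an edge at $v$ would sit in two distinct lines), so the sets $L_i\setminus\{v\}$ are pairwise disjoint subsets of $N_G(v)$, each of size at least $w+1-(s-1)e$. Summing gives $k\ge (s+1)(w+1)-(s+1)(s-1)e$, which conflicts with the upper bound in (4) once one checks that $(s+1)(s-1)\ge\tfrac{s(s+1)}{2}$ for $s\ge 2$; the case $s=1$ is handled separately using positivity of $e$ (the inequality then reads $2(w+1)\le k<2(w+1)-e$).

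The main obstacle is the uniqueness step, where the crux is to realize that the forced non-adjacent cross-pair in $C_1\setminus C_2$ and $C_2\setminus C_1$ has $x$, $y$, and every vertex of $C_1\cap C_2$ as common neighbors; only then does condition (3) become precisely the strength needed to close the argument. The vertex-bound (A) then falls out cleanly from uniqueness, disjointness, and the numerical comparison with (4). Notably, condition (4) is used only in the second step, while (3) is used only in the first.
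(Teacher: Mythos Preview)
Your argument has two genuine gaps.

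\textbf{Existence is missing.} The conclusion is that any two adjacent vertices occur together in a \emph{unique} line, i.e., in exactly one line. Your uniqueness step only shows ``at most one''; you never prove that every edge lies in some maximal clique of size $\ge w+2-(s-1)e$. This existence claim is in fact the heart of the theorem (and of the Bose--Laskar method). The paper does not give its own proof of this cited result, but its proof of the closely related Lemma~\ref{pls} shows what is needed: one works inside the local graph $N_G(x)$, which is $w$-regular on $k$ vertices with non-adjacent pairs having at most $e$ common neighbours, takes a maximum anti-clique $I=\{x_1,\dots,x_{s_0}\}$ there, and shows that the sets $P_i=\{y: y\not\sim x_j\text{ for all }j\ne i\}$ are large cliques whose maximal extensions cover $N_G(x)$. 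Nothing in your write-up establishes that a large clique through a given edge exists at all.

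\textbf{The inequality in the vertex-bound step is reversed.} From $s+1$ pairwise disjoint pieces of $N_G(v)$ you correctly obtain
\[
k\ \ge\ (s+1)(w+1)-(s+1)(s-1)e.
\]
For this to contradict $(4)$, namely $k<(s+1)(w+1)-\tfrac{s(s+1)}{2}e$, you would need $(s+1)(s-1)\le \tfrac{s(s+1)}{2}$, i.e.\ $s\le 2$, \emph{not} $(s+1)(s-1)\ge \tfrac{s(s+1)}{2}$ as you wrote. So for $s\ge 3$ your disjointness count is too weak to conflict with $(4)$. The correct route (see Claim~\ref{cl:s0} in the proof of Lemma~\ref{pls}) is to bound the size of an anti-clique in $N_G(v)$: if $N_G(v)$ contained an anti-clique of size $s+1$, then an inclusion--exclusion count (Metsch's \cite[Lemma~1.1]{km1}) gives $k\ge (s+1)(w+1)-\binom{s+1}{2}e$, which is exactly what contradicts $(4)$. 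The line bound then follows because distinct lines through $v$ yield an anti-clique in $N_G(v)$ of the same size, once the structural claims about large cliques are in place.

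Your ``at most one line per edge'' computation is fine, but without existence and a correct vertex-bound argument the proof is incomplete.
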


Theorem \ref{plsm} shows that $(V(G), \mathcal{C},\in)$ is a partial linear space, 
where $\mathcal{C}$ is the set of all lines of $G$, 
while $G$ is the point graph of this space.

The following lemma controls the number of neighbors 
of a vertex in a clique of a given order in a graph $G$, provided that 
the smallest eigenvalue of $G$ is bounded from below.

\begin{lemma}[{\rm\hspace{1sp}cf.\cite[Lemma 1.2]{yk}}]\label{cbound}
  Let $\lambda\geq1$ be an integer and $G$ a graph with 
  $\lambda_{\min}(G) \geq - \lambda$. Let $C$ be a clique 
  in $G$ with order $c$. If 
  \[
  c \geq \lambda^4-2\lambda^3+3\lambda^2-3\lambda+3,
  \]
  then any vertex outside $C$ has either at most $\lambda^2-\lambda$ 
  neighbors in $C$, or at least $c-(\lambda-1)^2$ neighbors in $C$. 
\end{lemma}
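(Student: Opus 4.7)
The plan is to apply eigenvalue interlacing to the induced subgraph on $C\cup\{v\}$ for a vertex $v\notin C$, reduce the problem to a single $3\times 3$ positive-semidefinite inequality, and then finish by an arithmetic manipulation calibrated exactly to the hypothesis on $c$.

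Let $x$ denote the number of neighbors of $v$ in $C$ and put $a:=c-x$. We may assume $x\ge 1$ and $a\ge 1$, otherwise the conclusion is immediate. The subgraph of $G$ induced on $C\cup\{v\}$ is isomorphic to $\tilde K_{x,a}$, and Cauchy's interlacing theorem yields $\lambda_{\min}(\tilde K_{x,a})\ge\lambda_{\min}(G)\ge-\lambda$. Partition $C\cup\{v\}$ into the $x$ neighbors of $v$ in $C$, the $a$ non-neighbors of $v$ in $C$, and $\{v\}$; this is an equitable partition, and its symmetrized quotient matrix
\[
\tilde Q \;=\; \begin{pmatrix} x-1 & \sqrt{xa} & \sqrt{x}\\ \sqrt{xa} & a-1 & 0\\ \sqrt{x} & 0 & 0\end{pmatrix}
\]
has eigenvalues that form a subset of those of $\tilde K_{x,a}$. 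Therefore $\tilde Q+\lambda I$ is positive semidefinite, and in particular $\det(\tilde Q+\lambda I)\ge 0$.

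Expanding this determinant along the third column and applying the identity $(x-1+\lambda)(a-1+\lambda)-xa=(\lambda-1)(x+a+\lambda-1)$ reduces the PSD condition to $\lambda(\lambda-1)(a+\lambda-1)\ge x\bigl(a-(\lambda-1)^2\bigr)$, and a short rearrangement then gives the clean form
\[
\bigl(x-\lambda(\lambda-1)\bigr)\bigl(a-(\lambda-1)^2\bigr)\;\le\;\lambda^2(\lambda-1)^2. \qquad(\star)
\]

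Now suppose, for contradiction, that $x>\lambda^2-\lambda$ and $a>(\lambda-1)^2$. Set $p:=x-\lambda(\lambda-1)$ and $q:=a-(\lambda-1)^2$; both are positive integers, so $(p-1)(q-1)\ge 0$ gives $pq\ge p+q-1$. Since
\[
p+q \;=\; c-\lambda(\lambda-1)-(\lambda-1)^2 \;=\; c-(\lambda-1)(2\lambda-1)
\]
and the hypothesis rewrites as $c\ge\lambda^2(\lambda-1)^2+(\lambda-1)(2\lambda-1)+2$, we obtain $p+q\ge\lambda^2(\lambda-1)^2+2$, whence $pq\ge\lambda^2(\lambda-1)^2+1$, contradicting $(\star)$. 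The main technical hurdle is the determinant computation producing $(\star)$; the precise form of the lower bound on $c$ is tailored so that the elementary inequality $(p-1)(q-1)\ge 0$ beats $(\star)$ by exactly one unit, which is what closes the argument.
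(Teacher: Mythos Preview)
The paper does not prove Lemma~\ref{cbound}; it is quoted from \cite[Lemma~1.2]{yk} without argument, so there is no in-paper proof to compare against. Your proof is correct and entirely self-contained: the equitable $3$-partition of $\tilde K_{x,a}$ gives the quotient matrix you wrote, the determinant computation and the rearrangement to $(\star)$ are accurate, and the final step uses $(p-1)(q-1)\ge 0$ together with the identity $\lambda^4-2\lambda^3+3\lambda^2-3\lambda+3=\lambda^2(\lambda-1)^2+(\lambda-1)(2\lambda-1)+2$, which you verified. Nothing is missing.
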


\begin{corollary}\label{cbound2}
In the notation of Lemma \ref{cbound},  
  if $\tilde K_{m,n}$ is an 
  induced subgraph of $G$ and 
  $n\geq \lambda^4-2\lambda^3+2\lambda^2-2\lambda+2$, 
  then $m\leq \lambda^2-\lambda$.
\end{corollary}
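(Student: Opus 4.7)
The plan is a direct application of Lemma \ref{cbound} to the clique $C$ of size $m+n$ arising as the $K_{m+n}$-part of the induced $\tilde K_{m,n}$, with the apex vertex $\infty$ playing the role of the external vertex whose neighborhood in $C$ is controlled.

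First I would argue by contradiction: suppose $m \geq \lambda^2 - \lambda + 1$. Then the clique $C := K_{m+n}$ has order
\[
c = m + n \ge (\lambda^2-\lambda+1) + (\lambda^4 - 2\lambda^3 + 2\lambda^2 - 2\lambda + 2) = \lambda^4 - 2\lambda^3 + 3\lambda^2 - 3\lambda + 3,
\]
which is exactly the threshold required to apply Lemma \ref{cbound}. The vertex $\infty$ lies outside $C$ and, by the definition of $\tilde K_{m,n}$, has exactly $m$ neighbors in $C$.

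Next I would invoke Lemma \ref{cbound}, which forces either $m \le \lambda^2 - \lambda$ or $m \ge c - (\lambda-1)^2$. The first alternative contradicts the standing assumption $m \ge \lambda^2-\lambda+1$. The second alternative gives $m \ge m + n - (\lambda-1)^2$, i.e.\ $n \le (\lambda-1)^2 = \lambda^2 - 2\lambda + 1$. But the hypothesis on $n$ yields
\[
n \ge \lambda^4 - 2\lambda^3 + 2\lambda^2 - 2\lambda + 2 = \lambda^2(\lambda-1)^2 + (\lambda-1)^2 + 1 > (\lambda-1)^2,
\]
a contradiction. Hence $m \le \lambda^2-\lambda$, as claimed.

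The argument is essentially bookkeeping once Lemma \ref{cbound} is in hand; there is no real obstacle, only the need to verify that the numerical threshold on $n$ in the corollary is calibrated so that (a) adding $\lambda^2-\lambda+1$ reaches the clique-size hypothesis of Lemma \ref{cbound}, and (b) $n$ strictly exceeds $(\lambda-1)^2$ so as to rule out the ``many neighbors'' branch. Both conditions are immediate from the given bound, so the proof is short.
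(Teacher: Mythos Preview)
Your proof is correct and follows essentially the same approach as the paper's: assume $m\ge \lambda^2-\lambda+1$, apply Lemma~\ref{cbound} to the $K_{m+n}$-clique with $\infty$ as the external vertex, and rule out both branches. Your explicit verification that $n>(\lambda-1)^2$ via the factorization $\lambda^4-2\lambda^3+2\lambda^2-2\lambda+2=(\lambda^2+1)(\lambda-1)^2+1$ is a small but welcome addition over the paper's terser ``a contradiction''.
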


\begin{proof}
  Assume that $m\geq \lambda^2-\lambda+1$ holds. Then $G$ contains 
  a clique (of $\tilde K_{m,n}$) with at least 
  $m+n \geq \lambda^4-2\lambda^3+3\lambda^2-3\lambda+3$ vertices. 
  Since the vertex $\infty$ of $\tilde K_{m,n}$ is adjacent to $m$ 
  vertices of this clique, it follows by Lemma \ref{cbound} 
  that either $m\geq m+n-(\lambda-1)^2$ or $m\leq \lambda^2-\lambda$. 
  In the former case, one has $n\leq (\lambda-1)^2$, a contradiction, 
  whence the result follows.  
\end{proof}

\begin{corollary}\label{cbound3}
In the notation of Lemma \ref{cbound},  
  let $m=\lambda^2-\lambda$ and 
  $n=\lambda^4-2\lambda^3+2\lambda^2-2\lambda+2$.
  Then $G$ is $\tilde K_{m+1,n}$-free.
\end{corollary}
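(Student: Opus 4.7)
My plan is to argue by contradiction, in the same spirit as the proof of Corollary \ref{cbound2}. Suppose that $G$ contains an induced copy of $\tilde K_{m+1,n}$. Inside this induced subgraph sits a complete graph on $(m+1)+n$ vertices; let $C$ denote this clique. Its order is
\[
c = (m+1) + n = (\lambda^2-\lambda+1) + (\lambda^4 - 2\lambda^3 + 2\lambda^2 - 2\lambda + 2) = \lambda^4 - 2\lambda^3 + 3\lambda^2 - 3\lambda + 3,
\]
which is precisely the threshold required to invoke Lemma \ref{cbound}.

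Next, I would apply Lemma \ref{cbound} to the apex vertex $\infty$ of the induced $\tilde K_{m+1,n}$, which by construction lies outside $C$ and has exactly $m+1 = \lambda^2-\lambda+1$ neighbors in $C$. Since this count strictly exceeds $\lambda^2-\lambda$, the first alternative of the lemma is ruled out, so the second alternative must hold, giving $m+1 \geq c - (\lambda-1)^2$, or equivalently $n \leq (\lambda-1)^2$.

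Finally, I would derive a contradiction via the direct identity
\[
n - (\lambda-1)^2 = \lambda^4 - 2\lambda^3 + \lambda^2 + 1 = \lambda^2(\lambda-1)^2 + 1 \geq 1,
\]
which contradicts $n \leq (\lambda-1)^2$. There is no real obstacle beyond this routine computation: the statement is engineered so that the parameters $m$ and $n$ make the clique size $(m+1)+n$ exactly match the hypothesis of Lemma \ref{cbound}, while $m+1$ just barely crosses the small side of its dichotomy, thereby forcing the infeasible large side.
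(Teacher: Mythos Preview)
Your argument is correct and is exactly the approach of the paper: Corollary~\ref{cbound3} is stated without proof because it is the special case of Corollary~\ref{cbound2} obtained by setting $m$ to $\lambda^2-\lambda+1$ and $n$ to $\lambda^4-2\lambda^3+2\lambda^2-2\lambda+2$, and you have simply reproduced the proof of Corollary~\ref{cbound2} with these values substituted in. The arithmetic checks out, including the identity $n-(\lambda-1)^2=\lambda^2(\lambda-1)^2+1$.
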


\begin{lemma}\label{pls}
  Let $s$, $m$, $n$, $v$, $w$ and $e$ be positive integers. 
  Suppose that $G$ is a graph with $v=|V(G)|$, satisfying the following conditions.
  \begin{enumerate}[(1)]
   \item $G$ is regular with valency $w$.
   \item Any two non-adjacent vertices have at most $e$ common neighbors.
   \item $w>(s-1)e+ms-1$, i.e., $w-(s-1)e+1 \geq ms+1$.
   \item $v<(s+1)(w+1)-\frac{s(s+1)}{2} e$.
   \item $ms+1\geq e+n$.
   \item $e>m$.
   \item $G$ is $ \tilde K_{m+1,n}$-free.
  \end{enumerate}
  Then we have the following assertions.
  \begin{enumerate}[(a)]
    \item Let $s_0$ be the maximum order of an anti-clique in $G$; then $s_0\leq s$.
    \item There are exactly $s_0$ maximal cliques with at least $w-(s-1)m+1$ vertices each, and every vertex of $G$ lies in exactly one of these cliques. 
  \end{enumerate}
\end{lemma}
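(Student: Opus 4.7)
My plan is to prove (a) by a direct counting argument, and to prove (b) in three stages: first showing every vertex lies in at most one \emph{line} (meaning a maximal clique of size at least $w-(s-1)m+1$), then showing every vertex lies in at least one, and finally showing that the total number of lines equals $s_0$.

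For (a), I would assume for contradiction that $\{x_0,\dots,x_s\}$ is an anti-clique of size $s+1$. The closed neighbourhoods $\{x_i\}\cup N(x_i)$ each contain $w+1$ vertices; their pairwise intersections consist of common neighbours of non-adjacent pairs, hence have size at most $e$ by (2); and the $x_i$'s themselves lie outside each other's neighbourhoods. Inclusion--exclusion then yields $v\geq (s+1)(w+1)-\binom{s+1}{2}e$, contradicting (4). For the uniqueness step of (b), suppose $x\in L_1\cap L_2$ for distinct lines $L_1,L_2$. Since $L_1\cup L_2\subseteq\{x\}\cup N(x)$, we get $|L_1\cap L_2|\geq|L_1|+|L_2|-w-1\geq w-2(s-1)m+1$. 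Maximality of both $L_1$ and $L_2$ produces non-adjacent $u\in L_1\setminus L_2$ and $v\in L_2\setminus L_1$, and $L_1\cap L_2\subseteq N(u)\cap N(v)$ combined with (2) gives $w\leq e+2(s-1)m-1$. Together with (3) and (6), this reduces to $(s-2)(e-m)<0$, impossible for $s\geq 2$; the case $s=1$ is excluded by (5) and (6), which would force $n\leq 0$.

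For existence, fix $x$, let $K\subseteq N(x)$ be a maximum clique with $c:=|K|$, and assume $c\leq w-(s-1)m-1$ for contradiction. The parameter inequalities (3), (5), (6) together guarantee $c\geq ms\geq e+n-1$ and $c+1\geq m+n+1$, so (7) applies to the clique $M:=\{x\}\cup K$: every $y\notin M$ has either at most $m$ neighbours or at least $c-n+2$ neighbours in $M$. For $y\in N(x)\setminus K$ in the ``many neighbours'' case, maximality of $K$ in $N(x)$ forces a non-neighbour $v\in K$; the common neighbours of the non-adjacent pair $(y,v)$ then include $x$ together with at least $c-n+1$ elements of $K$, contradicting (2) since $c\geq ms\geq e+n-1$. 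For $y\notin N[x]$, applying (2) to the non-adjacent pair $(x,y)$ and using $K\cap N(y)\subseteq N(x)\cap N(y)$ similarly rules out the ``many neighbours'' case. Hence every $y\notin M$ has at most $m-1$ (if $y\sim x$) or at most $m$ (if $y\not\sim x$) neighbours in $K$, and double-counting the $c(w-c)$ edges between $K$ and $V(G)\setminus M$ gives
\[
c(w-c)\leq (w-c)(m-1)+m(v-w-1),
\]
which simplifies to $v\geq w+1+(w-c)(c-m+1)/m$. Substituting $c\leq w-(s-1)m-1$ and comparing with the upper bound in (4) produces the contradiction, so $c\geq w-(s-1)m$ and $\{x\}\cup K$ extends to a line through $x$.

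For the count, the first two parts of (b) give a partition $V(G)=L_1\sqcup\cdots\sqcup L_t$. Since any anti-clique meets each $L_i$ in at most one vertex, $s_0\leq t$. For the converse, I would construct a transversal anti-clique $y_i\in L_i$ greedily. Having chosen pairwise non-adjacent $y_1,\dots,y_{i-1}$, for each $j<i$ condition (7) applied to $L_i$ gives either $|N(y_j)\cap L_i|\leq m$ (Case A) or $|N(y_j)\cap L_i|\geq|L_i|-n+1$ (Case B). Since $y_j\in L_j$ has at most $(s-1)m$ neighbours outside $L_j$, Case B would force $|L_i|\leq(s-1)m+n-1$; combined with $|L_i|\geq w-(s-1)m+1$ and (3), (5), (6), this yields a contradiction. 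Thus every $y_j$ falls in Case A, so $|L_i\cap\bigcup_{j<i}N(y_j)|\leq(i-1)m<|L_i|$ (since $|L_i|>(s-1)m\geq (i-1)m$ by (3)), and a valid $y_i$ exists. This produces an anti-clique of size $t$, giving $t\leq s_0$ and hence $t=s_0$.

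The main obstacle is the existence step: the double-count must deploy (2), (5), (7) in just the right way to eliminate the ``many neighbours'' configuration, after which a careful manipulation of the parameter inequalities (3)--(7) against the bound from (4) is required. The count step relies on the analogous exclusion of Case B in (7), again via the tight interplay of (3), (5), and (6).
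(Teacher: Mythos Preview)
Your existence step contains a genuine gap. First, the assertion that conditions (3), (5), (6) ``guarantee $c\geq ms$'' is unjustified: these are inequalities among the parameters and say nothing about the size of a maximum clique in $N(x)$. This can be patched by taking a \emph{maximal} anti-clique $I$ of $G$ through $x$ and observing that $\{y:y\not\sim z\text{ for all }z\in I\setminus\{x\}\}$ is a clique through $x$ of size at least $w-(|I|-1)e+1\geq w-(s-1)e+1\geq ms+1$; this is essentially Step~1 of the paper's Claim~3.

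But there is a more serious problem: even granting $c\geq ms$, your double-count does not yield a contradiction with~(4). The bound $v\geq w+1+(w-c)(c-m+1)/m$, minimized over $c\in[ms,\,w-(s-1)m-1]$, gives $v\geq w+1+((s-1)m+1)(w-sm)/m$ (both endpoints agree by the symmetry of the parabola). In the intended application with $D=3$ one has $w=137$, $s=10$, $m=6$, $e=8$, and your bound gives $v\geq 138+55\cdot 77/6\approx 844$, whereas condition~(4) only asserts $v<1078$. So no contradiction arises, and the local approach of extracting a strong clique through an arbitrary vertex $x$ by double-counting cannot succeed. The paper proceeds quite differently: it first builds $s_0$ \emph{weak} cliques $C_1,\dots,C_{s_0}$ from a maximum anti-clique, then shows (Step~3 of Claim~3) that these cover $V(G)$, and only afterwards upgrades them to \emph{strong} cliques using the global fact that every vertex outside a given $C_i$ has at most $m$ neighbours in it.

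A smaller issue: in your count step, the exclusion of Case~B via ``(3), (5), (6)'' does not close (try $s=5$, $e=3$, $m=2$). The correct route, as in the paper's Claim~2, is to first bound $|N(y_j)\cap L_i|\leq e$ via condition~(2) and maximality of $L_i$, and then observe that $|L_i|-n+1\geq ms+2-n>e$ by~(5), so~(7) forces $|N(y_j)\cap L_i|\leq m$.
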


\begin{proof}
  We prove the lemma with several claims.

\begin{claim}\label{cl:s0}
    $s_0 \leq s$.
\end{claim}
  \begin{proof}
    By {\cite[Lemma 1.1]{km1}}, if there exists an anti-clique of order $s+1$, then $v\geq (s+1)(w+1)-\frac{s(s+1)}{2}e$, contrary to Condition $(4)$.
\end{proof}

We shall call a maximal clique \emph{weak} if it has at least $w-(s-1)e+1$ vertices, or \emph{strong} if it has at least $w-(s-1)m+1$ vertices. 
Note that, as $w>(s-1)e+ms-1$, a weak clique has at least $ms+1$ vertices.

\begin{claim}\label{cl:clique-inter}
    If $C$ is a weak clique and $x\notin C$, then $|N_G(x)\cap V(C)|\leq m$. 
    In particular, $|V(C)\cap V(C')| \leq m$ for any other weak clique $C'$.
\end{claim}
\begin{proof}
  Clearly, a weak clique has at least $ms+1\geq e+n\geq m+n+1$ vertices. 
  By Condition (2) and the maximality of $C$, it follows that 
  $|N_G(x)\cap V(C)|\leq e$. By Condition (7), 
  we have 
  $|N_G(x)\cap V(C)|\leq m$ and 
  the claim follows.
\end{proof}

\begin{claim}\label{cl:s0cliques}
    Each vertex lies in a weak clique and there are exactly $s_0$ weak cliques.
\end{claim}
\begin{proof}
  We prove Claim \ref{cl:s0cliques} in several steps.
\begin{enumerate}[Step 1]
  \item Suppose that $I=\{x_1,x_2,...,x_{s_0}\}$ is an anti-clique 
  of maximum order $s_0$. Then the set 
  \[
  P_i:=\{y \in V(G)\mid~y\text{ is nonadjacent to every vertex of }I\backslash\{x_i\}\}
  \]
   is a clique with at least $w-(s-1)e+1$ vertices for $i=1,2,...,s_0$. 
   Furthermore, $x_i \in P_i$, and $P_i$ is contained in a weak clique, 
   say $C_i$, for $i=1,2,...,s_0$.
  \begin{proof}
    For any $i\in\{1,2,...,s_0\}$, if there exist two nonadjacent 
    vertices $y_1$ and $y_2$ of $P_i$, then the set $(I-\{x_i\})\cup \{y_1,y_2\}$ 
    is an anti-clique of order $s_0+1$, a contradiction with Claim \ref{cl:s0}. 
    Therefore, $P_i$ is a clique containing $x_i$ and all vertices of $N_G(x_i)$ except those which are adjacent to some vertex of $I\backslash\{x_i\}$. 
    By Condition (2), it follows that $P_i$ has at least 
    $|N_G(x_i)|+1-(s_0-1)e\geq w+1-(s_0-1)e\geq w+1-(s-1)e$ vertices, and we may assume that 
    the set 
    $P_i$ is contained in some weak clique $C_i$.
  \end{proof} 
  
  \item $C_i$ is the unique weak clique containing $P_i$ for $i=1,2,...,s_0$.
  \begin{proof}
    Assume $C_i$ and $C'$ are different weak cliques with $V(P_i)\subseteq V(C_i)\cap V(C')$. By Claim \ref{cl:clique-inter} and Conditions (3), 
    (5) and (6), one has  
    $m\geq |V(C_i)\cap V(C')|\geq |V(P_i)|\geq w-(s-1)e+1\geq ms+1$, 
    a contradiction.
  \end{proof}
  
  \item $V(G)=V(C_1)\cup V(C_2)...\cup V(C_{s_0})$.
  \begin{proof}
    Assume there exists a vertex $x_0$ such that $x_0\notin V(C_1)\cup V(C_2)...\cup V(C_{s_0})$. Let $I_0$ be a largest anti-clique 
    containing $x_0$. Obviously $|I_0|\leq s_0$.

    Note that each of $C_i$ contains one vertex of $I_0$, $i=1,2,...,s_0$. 
    Indeed, if there exists $C_i$ with $V(C_i)\cap I_0=\varnothing $, then 
    by Claim \ref{cl:clique-inter}, any vertex in $I_0$ has at most $m$ neighbors 
    in $C_i$. As $|V(C_i)|\geq w-(s-1)e+1 \geq ms+1 \geq m|I_0|+1$, there exists a vertex $z\in C_i$ such that $z$ is nonadjacent to any vertex of $I_0$, and hence we obtain a larger anti-clique $\{z\}\cup I_0$, a contradiction.

    As each of $C_i$, $i=1,2,...,s_0$, 
    contains one vertex of $I_0$ except $x_0$ 
    and $|I_0|\leq s_0$, it follows that 
    there exists a vertex $u_0\in I_0\backslash\{x_0\}$ such that $u_0$ lies in at least two of $C_1,C_2,...,C_{s_0}$. Without loss of generality, we assume $u_0 \in V(C_1)\cap V(C_2)$.

    By Claim \ref{cl:clique-inter}, any vertex of $I_0\backslash \{u_0\}$ has 
    at most $m$ neighbors in $C_i$, $i=1,2$, and $|V(C_1)\cap V(C_2)|\leq m$. 
    As $|V(C_1)|-m-(|I_0|-1)m\geq ms+1-s_0m\geq 1$, there exists $u_1\in V(C_1)\backslash V(C_2)$ such that $u_1$ is not adjacent to any vertex of $I_0\backslash \{u_0\}$. Similarly, as $V(C_1)\cap V(C_2)\subseteq N_G(u_1)\cap V(C_2)$ and $|V(C_2)|-m-(|I_0|-1)m\geq ms+1-s_0m\geq 1$, there exists $u_2\in V(C_2)\backslash V(C_1)$ such that $u_2$ is not adjacent to any vertex of $(I_0-\{u_0\})\cup \{u_1\}$. Now $(I_0-\{u_0\})\cup \{u_1,u_2\}$ is a larger 
    anti-clique containing $x_0$, a contradiction with the choice of $I_0$.
  \end{proof}
  
  \item $C_1,C_2,...,C_{s_0}$ are all the weak cliques in $G$.
  \begin{proof}
    Suppose there exists a weak clique $C'$, distinct from $C_1,C_2,...,C_{s_0}$. 
    By Claim \ref{cl:clique-inter}, 
    $|V(C')\cap V(C_i)|\leq m$ for $i=1,2,...,s_0$. 
    By Step 3, 
    $|V(C')|\leq \sum_{i = 1}^{s_0} |V(C')\cap V(C_i)|\leq ms_0\leq ms $, 
    a contradiction with the definition of a weak clique.
  \end{proof}

\end{enumerate}
Following the above steps, Claim \ref{cl:s0cliques} is proved.
\end{proof}

\begin{claim}
    The weak cliques $C_1,C_2,...,C_{s_0}$ are strong.
\end{claim}
\begin{proof}
  By Claim \ref{cl:clique-inter}, the vertex $x_i\in C_i$ has at most $m$ 
  neighbors in $C_j$ for $j\neq i$. Hence, by Claim \ref{cl:s0cliques}, 
  $|V(C_i)|\geq d_G(x_i)+1-(s_0-1)m \geq w+1-(s_0-1)m\geq w-(s-1)m+1$; 
  thus, by definition, $C_i$ is a strong clique. 
\end{proof}

\begin{claim}
    $V(C_i)\cap V(C_j)=\varnothing$ for $i\neq j$.
\end{claim}
\begin{proof}
  Assume there exists a vertex $u\in V(C_i)\cap V(C_j)$ for $i\neq j$. 
  By Claim \ref{cl:clique-inter}, $|V(C_i)\cap V(C_j)|\leq m$. Since 
  $V(C_i)\cup V(C_j)\subseteq N_G(u)\cup \{u\}$, we obtain 
  $|V(C_i)\cup V(C_j)|\leq w+1$. Therefore, 
  \[
  w+m+1\geq |V(C_i)\cap V(C_j)|+|V(C_i)\cup V(C_j)|=|V(C_i)|+|V(C_j)|\geq 2w-2(s-1)m+2,
  \]
  i.e., $0\geq w-(2s-1)m+1$. However, by Conditions (3) and (6), $w>(s-1)e+ms-1\geq (s-1)m+ms-1\geq (2s-1)m-1$, a contradiction.
\end{proof}

Following the above discussion, Lemma \ref{pls} is proved.
\end{proof}

The next result follows directly from Lemma \ref{pls} and represents 
an improvement over Theorem \ref{plsm} in the following manner: 
we replace the condition $w>(2s-1)e-1$ of Theorem \ref{plsm} with 
$w>(s-1)e+ms-1$. Moreover, since we also require $e>m$, 
the range of $s$ satisfying the conditions is extended. 
The key factor contributing to this improvement is our observation 
that the local graph $N_G(x)$ is $\tilde K_{m+1,n}$-free. We will then use Theorem \ref{pls2} to establish a partial linear space.

\begin{theorem}\label{pls2}
Let $s$, $m$, $n$, $k$, $w$ and $e$ be positive integers. 
Suppose that $G$ is a $k$-regular graph satisfying the following conditions.
\begin{enumerate}[(1)]
  \item Any two adjacent vertices have $w$ common neighbors.
  \item Any two non-adjacent vertices have at most $e+1$ common neighbors.
  \item $w>(s-1)e+ms-1$.
  \item $k<(s+1)(w+1)-\frac{s(s+1)}{2} e$.
  \item $ms+1\geq e+n$.
  \item $e>m$.
  \item For every vertex $x\in V(G)$, its local graph $N_G(x)$ is $\tilde K_{m+1,n}$-free. 
\end{enumerate}
Define a line to be a maximal clique of $G$ 
with at least $w+2-(s-1)m$ vertices.
Then every vertex is on at most $s$ lines, and any two adjacent vertices occur together in a unique line.
\end{theorem}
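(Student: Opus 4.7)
The plan is to apply Lemma~\ref{pls} to the local graph $H := N_G(x)$ at each vertex $x \in V(G)$. First I will verify the hypotheses of Lemma~\ref{pls} for $H$, with the parameters $s, m, n, w, e$ as in Theorem~\ref{pls2} and with $v := |V(H)| = k$. For any $y \in H$, condition~(1) of Theorem~\ref{pls2} gives $|N_G(x) \cap N_G(y)| = w$, so $H$ is $w$-regular. If $y, z \in H$ are non-adjacent in $H$ (hence in $G$), then $x$ is a common $G$-neighbor of $y$ and $z$ but $x \notin H$, so their common neighbors in $H$ number at most $(e+1) - 1 = e$, giving condition~(2) of Lemma~\ref{pls}. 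Conditions (3), (5), (6), (7) carry over verbatim, while condition~(4) becomes $k < (s+1)(w+1) - \frac{s(s+1)}{2}e$, which is exactly condition~(4) of Theorem~\ref{pls2}.

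Applying Lemma~\ref{pls}(b) to $H$ then yields an integer $s_0 \le s$ and exactly $s_0$ maximal cliques $C'_1, \dots, C'_{s_0}$ of $H$, each of order at least $w - (s-1)m + 1$, whose vertex sets partition $V(H)$. Next I set up the natural bijection $C \mapsto C \setminus \{x\}$ between maximal cliques of $G$ containing $x$ and maximal cliques of $H$: any strictly larger clique on one side produces a strictly larger clique on the other, so maximality is preserved in both directions. Under this bijection, lines of $G$ through $x$---that is, maximal cliques of $G$ of size at least $w + 2 - (s-1)m$---correspond precisely to maximal cliques of $H$ of size at least $w + 1 - (s-1)m = w - (s-1)m + 1$, i.e.\ to the cliques $C'_1, \dots, C'_{s_0}$. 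Hence there are exactly $s_0 \le s$ lines through $x$, and for any edge $\{x,y\}$ of $G$ the unique $C'_i$ containing $y$ determines the unique line $\{x\} \cup V(C'_i)$ of $G$ through both $x$ and $y$.

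There is no substantive obstacle; the theorem is essentially a repackaging of Lemma~\ref{pls} in the local graph. The one thing to be careful about is the bookkeeping with the ``$-1$'' shifts: one common neighbor is lost to $x$ when passing from $G$ to $H$, which is precisely what turns the ``$e+1$'' in Theorem~\ref{pls2}(2) into the ``$e$'' of Lemma~\ref{pls}(2); and one vertex is lost when passing from a line of $G$ to a clique of $H$, which aligns the line threshold $w + 2 - (s-1)m$ with the strong-clique threshold $w - (s-1)m + 1$ appearing in Lemma~\ref{pls}(b).
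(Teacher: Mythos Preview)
Your proof is correct and matches the paper's intended approach: the paper states that Theorem~\ref{pls2} ``follows directly from Lemma~\ref{pls}'', and the direct derivation is precisely the localization argument you give, applying Lemma~\ref{pls} to $N_G(x)$ and translating back via the bijection $C \mapsto C \setminus \{x\}$. Your bookkeeping on the two ``$-1$'' shifts is exactly the point, and your observation that a line through both $x$ and $y$ must in particular be a line through $x$ (hence determined by the unique strong clique of $N_G(x)$ containing $y$) correctly handles uniqueness.
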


\section{Uniqueness of $J_2(2D+3,D)$} \label{sect:MR}

In this section, we prove our main result.

\begin{theorem}\label{main}
Let $G$ be a distance-regular graph with classical parameters 
$(D,b,\alpha,\beta)$ where $D\geq 3$, $b=2$, $\alpha=2$, and 
$\beta \geq \begin{bmatrix}
    D+4\\
    1
\end{bmatrix}_2-1$. 
Then 
$\beta=\begin{bmatrix}
    D+\ell+1\\
    1
\end{bmatrix}_2-1$
for some integer $\ell\geq 3$, and $G$ is the Grassmann graph $J_2(2D+\ell, D)$. 
In particular, $J_2(2D+3, D)$ is uniquely determined by its intersection numbers.  
\end{theorem}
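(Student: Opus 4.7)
The plan is to apply the refined Bose--Laskar-type Theorem \ref{pls2} to $G$ in order to build a partial linear space $(V(G),\mathcal{L},\in)$ whose lines are sufficiently large maximal cliques, then invoke Theorem \ref{rcs} to conclude that $G$ is a Grassmann graph $J_q(n,D)$; matching parameters will then force $q=2$ and $n=2D+\ell$ for some $\ell\ge 3$, whence $\be=\begin{bmatrix}D+\ell+1\\1\end{bmatrix}_2-1$.

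First I would extract the input to Theorem \ref{pls2}. Using Lemma \ref{gg} with $b=\al=2$, the valency is $k=\begin{bmatrix}D\\1\end{bmatrix}_2\be$, two adjacent vertices share $w:=a_1=\be+2\begin{bmatrix}D\\1\end{bmatrix}_2-3$ neighbors, and any two vertices at distance two share $c_2=\begin{bmatrix}2\\1\end{bmatrix}_2^2=9$ neighbors, so in Theorem \ref{pls2} we take $e:=8$. Next, Lemma \ref{localgraph} applied to $G$ (whose classical parameters satisfy $b=2$) yields $\la_{\min}(N_G(x))\ge -3$ for every $x\in V(G)$, and then Corollary \ref{cbound3} with $\la=3$ shows that every local graph $N_G(x)$ is $\tilde K_{7,41}$-free. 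Hence condition (7) of Theorem \ref{pls2} is satisfied with $m=6$ and $n=41$, and condition (6) ($e>m$) holds. The remaining conditions (3), (4) and (5) reduce to three numerical inequalities in a single integer $s$; using the lower bound $\be\ge\begin{bmatrix}D+4\\1\end{bmatrix}_2-1$ together with the slack granted by the improved inequality (3), $w>(s-1)e+ms-1$, one can choose $s$ (depending on $D$ and $\be$) for which all three hold simultaneously. Theorem \ref{pls2} then produces the desired family $\mathcal{L}$ of maximal cliques of $G$, each of size at least $w+2-(s-1)m$, with the property that every pair of adjacent vertices lies in a unique member of $\mathcal{L}$ and every vertex lies on at most $s$ members; this yields the partial linear space $(V(G),\mathcal{L},\in)$.

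Finally I would verify the five hypotheses of Theorem \ref{rcs} with $q=2$. Condition (1) (each line has at least $q^2+q+1=7$ points) follows from the chosen threshold and the magnitude of $w$; condition (2) (each point is on more than $q+1=3$ lines) follows from counting distinct large cliques through a vertex via the intersection numbers; conditions (3) and (4) --- the precise counts of $q+1=3$ lines through $p$ meeting $L$ when $p$ is at distance $1$ from $L$, respectively $q+1=3$ lines through $p$ at distance $1$ from $p'$ when $\partial(p,p')=2$ --- are verified by counting in $G$ using its intersection numbers, reproducing the relevant argument from \cite{km1} in the partial linear space constructed above; connectivity is automatic. Theorem \ref{rcs} then yields $G\cong J_q(n,D)$ for some prime power $q$ and integer $n\ge 2D$; matching valencies forces $q=2$ and $\be=\begin{bmatrix}n-D+1\\1\end{bmatrix}_2-1$, so writing $n=2D+\ell$ gives $\be=\begin{bmatrix}D+\ell+1\\1\end{bmatrix}_2-1$, and the hypothesis on $\be$ forces $\ell\ge 3$. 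The main obstacle I expect is twofold: first, the careful choice of $s$ in the previous step, where condition (4) of Theorem \ref{pls2}, $k<(s+1)(w+1)-\tfrac{s(s+1)}{2}e$, is the tightest inequality and fails under the original Metsch hypothesis precisely when $\ell$ is small (the regime now handled via the $\tilde K_{m+1,n}$-freeness of $N_G(x)$); and second, the verification of the precise counts in conditions (3) and (4) of Theorem \ref{rcs}, which requires a delicate analysis of the newly constructed partial linear space.
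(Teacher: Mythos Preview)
Your approach is essentially identical to the paper's: compute $k$, $w=a_1$, $e=c_2-1=8$, use Lemma~\ref{localgraph} and Corollary~\ref{cbound3} with $\lambda=3$ to obtain $m=6$, $n=41$, verify the numerical conditions of Theorem~\ref{pls2}, and then defer the verification of Theorem~\ref{rcs} to Metsch's argument in~\cite{km1}. The only point left vague is your ``one can choose $s$''; the paper commits to the explicit value $s=\tfrac{5}{4}\cdot 2^D$ and checks conditions (3)--(5) of Theorem~\ref{pls2} directly with that choice.
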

\begin{proof}
  By Eqs. \eqref{bi} and \eqref{ci},
  $G$ has intersection numbers $b_0=(2^D-1)\beta$, $b_1=(2^D-2)(\beta-2)$, $a_1=b_0-c_1-b_1=\beta-1+2(2^D-2)$ and $c_2=9$.
  
  Put $s:=\frac{5}{4}\cdot 2^D$, $m:=6$, $n:=41$, $k:=b_0=(2^D-1)\beta$, $w:=a_1=\beta-1+2(2^D-2)$ and $e:=c_2-1=8$. We shall show that all the conditions of Theorem \ref{pls2} are satisfied.

  Since $D\geq 3$, we have $w-(s-1)e-ms+1=w-(14s-8)+1=\beta-1+2(2^D-2)-\frac{35}{2}\cdot2^D+9>0$, as $\beta\geq 2^{D+4}-2$. Hence Condition (3) 
  of Theorem \ref{pls2} is satisfied.

  Since $D \geq 3$, we have:
\begin{align*}
&(s+1)(w+1) - \frac{s(s+1)e}{2} - k \\
&= (s+1)(w+1-4s) - k \\
&= \left( \frac{5}{4}\cdot2^D + 1 \right) \left( \beta + 2(2^D - 2) - 5 \cdot 2^D \right) - (2^D - 1) \beta \\
&= \left( \frac{1}{4}\cdot2^D + 2 \right) \beta - \left( \frac{5}{4}\cdot2^D + 1 \right) (3 \cdot 2^D + 4) \\
&\geq \left( \frac{1}{4}\cdot2^D + 2 \right) \left( 2^{D+4} - 2 \right) - \left( \frac{5}{4}\cdot2^D + 1 \right) (3 \cdot 2^D + 4) \\
&= 4 \cdot 2^{2D} + (32 - \frac{1}{2}) \cdot 2^D - 4 - \left( \frac{15}{4}\cdot2^{2D} + 8 \cdot 2^D + 4 \right) \\
&= \frac{1}{4}\cdot2^{2D} + (24 - \frac{1}{2}) \cdot 2^D - 8 > 0.
\end{align*}
Hence Condition (4) of Theorem \ref{pls2} is satisfied.

  Since $D\geq 3$, we have $ms+1-e-n=6\cdot\frac{5}{4}\cdot2^D+1-8-41=6\cdot(\frac{5}{4}\cdot2^D-8)>0$. Hence Condition (5) of Theorem \ref{pls2} is satisfied.

  For a vertex $x\in V(G)$, it follows by Lemma \ref{localgraph} that 
  $\lambda_{min}(N_G(x))\geq-3$. Therefore, by Corollary \ref{cbound3}, 
  the local graph $N_G(x)$ is $\tilde K_{7,41}$-free, 
  and Condition (7) of Theorem~\ref{pls2} is satisfied.
  
  Obviously, Conditions (1), (2) and (6) of Theorem~\ref{pls2} are satisfied. By Theorem~\ref{pls2}, define a line to be a maximal clique $C$ satisfying $|V(C)|\geq w+2-(s-1)m$. Then every vertex of $G$ lies on at most $s$ lines, and any two adjacent vertices lie on a unique line, i.e., we have constructed a partial linear space.

  Now the proof can be completed following 
  the argument of Metsch \cite{km1}.
\end{proof}

\begin{remark}
    The method we present here does not apply to $J_2(2D+2, D)$ and $J_3(2D+2, D)$, the other remaining cases. For $J_2(2D+2, D)$, there is no integer $s$ that satisfies Condition (4) of Theorem \ref{pls2}. For $J_3(2D+2, D)$, there is no integer $s$ that satisfies both Condition (3) and Condition (4) of Theorem \ref{pls2} at the same time.
\end{remark}

\section*{Acknowledgements}  
\noindent J.H. Koolen is partially supported by the National Key R. and D. 
Program of China (No. 2020YFA0713100), the National Natural Science Foundation 
of China (No. 12471335), and the Anhui Initiative in Quantum Information Technologies (No. AHY150000).

\bigskip

\clearpage

\end{document}